\newtheorem{theorem}{Theorem}
\newtheorem{lemma}{Lemma}
\newtheorem{conjecture}{Conjecture}
\title{List-Edge Coloring with bounded Maximum Average Degree}
\author{Joshua Harrelson\footnote{joshua.harrelson@mga.edu, ORCID: 0000-0002-9749-374X }
}
\begin{document}

\maketitle

\begin{abstract}
For a graph $G$, we show that if $mad(G)<m$, then $\chi'_\ell(G)\leq \Delta+1$ where $m$ depends upon $\Delta$ and $\chi'_\ell(G)$ is the list-chromatic index of $G$. When $\Delta\leq 20$ the value of $m$ is close to $\frac{1}{2}\Delta$, but as $\Delta$ increases $m$ becomes asymptotic to about $\frac{1}{4}\Delta+5$.
\end{abstract}

\section{Introduction}
We consider only simple graphs in this paper which is an extension of \cite{HR}, and we will introduce similar definitions to this citation for convenience. The \textit{average degree} of a graph $G$ is $ad(G)=\frac{\sum deg(v)}{v(G)}$, and the \textit{maximum average degree} of a graph $G$ is $mad(G)=max\{ad(H): H\subseteq G\}$. 

We say an \emph{edge-coloring} of $G$ is a function which maps one color to every edge of $G$ such that the same color is not assigned to adjacent edges. A $k$-edge-coloring of $G$ is an edge-coloring of $G$ which maps a total of $k$ colors to $E(G)$ and the chromatic index $\chi'(G)$ is the minimum $k$ such that $G$ is $k$-edge-colorable. If $G$ is a graph with maximum degree $\Delta(G)=\Delta$, then Vizing's Theorem \cite{Viz} gives us $\chi'(G)\leq \Delta+1$. 

An \emph{edge-list-assignment} of $G$ is a function which maps a set of colors to every edge in $G$. If $L$ is an edge-list-assignment of $G$, then we say the set of colors mapped to $e \in E(G)$ is the list, $L(e)$. We say that $G$ is \emph{L-colorable} if $G$ can be properly edge-colored such that every edge $e$ receives a color from $L(e)$. We say that $G$ is \emph{$k$-list-edge-colorable} if $G$ is $L$-colorable for all $L$ such that $|L(e)|\geq k$ for all $e \in E(G)$. The list-chromatic index, $\chi'_\ell(G)$, is the minimum $k$ such that $G$ is $k$-list-edge-colorable. This is to say that $\chi'_\ell(G)$ seeks a list-edge-coloring for all list-assignments with list-size at least $k$. We note that \emph{$k$-list-edge-colorable} is referred to as \emph{$k$-edge-choosable} in other papers.  

We know that $\Delta\leq \chi'(G) \leq \chi'_\ell(G)$ for all graphs. The List-Edge Coloring Conjecture (LECC) proposes that $\chi'_\ell(G)=\chi'(G)$, but this has only been verified for a few special families of graphs, notably bipartite graphs due to Galvin \cite{Gal}. In this paper, we will focus on a relaxation of the LECC proposed by Vizing.

\begin{conjecture}[Vizing \cite{Vz}]
\label{vizcon}
If $G$ is a graph, then $\chi'_\ell(G) \leq \Delta+1$.
\end{conjecture}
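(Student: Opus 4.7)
The statement above is Vizing's list-edge-coloring conjecture, which remains open in full generality; accordingly my plan describes a line of attack rather than claims a complete argument. The overall strategy I would pursue is the \emph{minimum counterexample with reducible configurations plus discharging} framework that has been successful for Vizing's original theorem and for the paper's bounded-$mad$ result, and which is essentially the only known route that produces an exact $\Delta+1$ bound rather than a $(1+o(1))\Delta$ asymptotic bound.

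First, I would assume for contradiction that $G$ is a minimum counterexample on the fewest edges, equipped with a list assignment $L$ with $|L(e)|=\Delta+1$ for which no proper $L$-coloring exists. Minimality means every proper subgraph $H\subsetneq G$ satisfies $\chi'_\ell(H)\le\Delta(H)+1\le\Delta+1$, so for any edge $e$ we may $L$-color $G-e$ and need only extend to $e$. From this starting point I would build a catalogue of unavoidable local configurations: low-degree vertices adjacent to vertices with specified degree patterns, short paths and cycles through low-degree vertices, and so on. For each configuration I would try to prove it is \emph{reducible} via Vizing-fan and Kempe-chain recoloring lifted to the list setting, so that its appearance in $G$ would let us recolor constantly many edges and extend the coloring, contradicting the choice of $G$.

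Second, with a catalogue of forbidden configurations in hand, I would set up a global \emph{discharging} argument: assign each vertex an initial charge such as $\deg(v)-\Delta-1$ (or some shifted variant), redistribute charge across edges and faces according to rules tailored to the forbidden configurations, and derive a contradiction between the nonnegativity of final charges and the negativity of the total initial charge. This is exactly the template underlying the paper's own theorem, which closes the inequality only after pulling in the hypothesis $mad(G)<m(\Delta)$ to bound the total initial charge from above.

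The hard part, and the reason this conjecture has resisted proof since 1976, lies precisely in the last step. Without a density assumption, the stock of reducible configurations that Kempe-chain/Vizing-fan arguments yield in the list setting is simply not rich enough to force a contradiction by discharging on an arbitrary graph. Genuine progress past partial results like the one in this paper would likely require either (a) producing substantially larger reducible configurations surviving worst-case lists, paralleling the deep reductions used for ordinary edge-coloring by Kostochka--Stiebitz and collaborators, or (b) hybridizing the combinatorial reduction with a probabilistic semi-random nibble of Kahn--Molloy--Reed type to handle the dense core that remains after peeling low-degree structure. I would fully expect the proof attempt to stall at exactly this junction, which is why the paper wisely restricts itself to the bounded-$mad$ regime where the discharging inequality can actually be closed.
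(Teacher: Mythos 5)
You have correctly identified that this statement is an open conjecture: the paper states it as Conjecture \ref{vizcon} and offers no proof of it, proving only the conditional result Theorem \ref{mad} under the hypothesis $mad(G)<m$. So there is no ``paper's own proof'' to compare against, and your refusal to claim a complete argument is the right call. Your sketch of the standard attack --- minimum counterexample, reducible configurations, discharging --- does accurately describe the template the paper uses for its partial result, with one small correction of emphasis: the reducibility input in this line of work is not raw Vizing-fan/Kempe-chain recoloring but rather Lemma \ref{edgeweight} (the edge-weight bound for $(\Delta+1)$-list-edge-critical graphs) combined with the Borodin--Kostochka--Woodall generalization of Galvin's theorem, which together yield the counting inequalities of Lemma \ref{hmp}; the discharging is then done on vertices against a global pot rather than across faces. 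Your diagnosis of where the unconditional argument stalls --- the configurations available in the list setting are not rich enough to close the discharging inequality without a density hypothesis --- is exactly why the paper restricts to bounded $mad$. Nothing further is required of you here; the statement remains open.
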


Vizing verified this conjecture when $\Delta=3$ in 1976 \cite{Vz} and this was also independently verifying by Erd\H{o}s, Rubin, and Taylor in 1979 \cite{ERT}.
Juvan, Mohar, \v{S}krekovski verified the conjecture when $\Delta=4$ in 1998 \cite{JMS}.

Vizing's conjecture has generated interest for the family of planar graphs. Borodin \cite{B} verified Conjecture \ref{vizcon} for planar graphs with $\Delta\geq9$ in 1990 and this result was improved to planar graphs with $\Delta\geq8$ by Bonamy \cite{Bo} in 2015. Zhang \cite{Z} also verified Vizing's conjecture for planar graphs without triangles in 2004.

For a graph $G$, we let $V_x$ and $V_{[x,y]}$ be vertex sets such that $V_x=\{v\in V(G)\ | \ deg(v)=x\}$ and $V_{[x,y]}=\{v\in V(G)\ | \ x\leq deg(v) \leq y\}$. We say that a graph $G$ is $\emph{k-list-edge-critical}$ \ if $\chi'_\ell(G)>k$, and $\chi'_\ell(G-e)\leq k$ for all $e\in E(G)$. In 2010, Cohen and Havet \cite{CH} provided an alternate proof of Borodin's theorem which reduced the argument to about a single page. Their paper used the minimality of list-edge-critical graphs and a clever discharging argument. We state one of their lemmas below.

\begin{lemma}[Cohen \& Havet \cite{CH}]
\label{edgeweight}
If $G$ is $(\Delta+1)$-list-edge-critical, then $deg(u)+deg(v)\geq \Delta+3$.
\end{lemma}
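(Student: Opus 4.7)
The plan is a direct contradiction argument using a greedy extension. Suppose some edge $uv \in E(G)$ satisfies $deg(u) + deg(v) \leq \Delta + 2$, and let $L$ be any list assignment on $E(G)$ with $|L(e)| \geq \Delta + 1$ for every $e$. Because $G$ is $(\Delta+1)$-list-edge-critical, $G - uv$ admits a proper $L$-edge-coloring; the entire argument reduces to showing that this coloring can always be extended to $uv$, which then contradicts $\chi'_\ell(G) > \Delta + 1$.

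To verify that the extension succeeds, count the edges of $G - uv$ that share an endpoint with $uv$: there are at most $(deg(u) - 1) + (deg(v) - 1) = deg(u) + deg(v) - 2 \leq \Delta$ of them. These edges carry at most $\Delta$ distinct colors in total, so at most $\Delta$ elements of $L(uv)$ are forbidden. Since $|L(uv)| \geq \Delta + 1$, a usable color remains, completing the coloring and delivering the contradiction. Hence every edge $uv$ must satisfy $deg(u) + deg(v) \geq \Delta + 3$.

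There is no real obstacle here; the proof is a one-line greedy extension enabled by the criticality hypothesis, so the only thing to be careful about is making sure the count of forbidden colors at $uv$ is strictly less than $|L(uv)|$. It is worth noting, however, that this is why the list-edge framework yields the sharper bound $deg(u) + deg(v) \geq \Delta + 3$ rather than the classical Vizing-style adjacency inequality $deg(u) + deg(v) \geq \Delta + 2$: in the list setting one cannot perform Kempe-chain or color-permutation tricks across distinct lists, so the naive greedy count, applied uniformly over all list assignments, is already the tight estimate.
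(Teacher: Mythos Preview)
Your argument is correct and is exactly the standard proof of this fact; the paper itself does not reprove the lemma but simply cites it from Cohen and Havet, so there is nothing to compare against.

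One small quibble with your closing aside: the contrast you draw between the bound $\Delta+3$ here and the ``classical Vizing-style'' bound $\Delta+2$ is a bit misleading. The $\Delta+2$ inequality is the greedy bound for $\Delta$-edge-critical graphs (where only $\Delta$ colors are available), not for $(\Delta+1)$-critical ones; at the same criticality threshold the greedy count yields the same inequality in both the ordinary and the list setting. Kempe-chain arguments in the ordinary setting serve to \emph{strengthen} the conclusion beyond greedy (e.g., the Vizing Adjacency Lemma), not to weaken it, so the list framework is not giving a sharper bound---it is simply that the greedy bound is all one has. This does not affect the validity of your proof.
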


Lemma \ref{edgeweight}, together with Borodin, Kostochka, Woodall's generalization \cite{BKW} of Galvin's Theorem, were used to prove the following lemma which is a simplified version of Lemma 9 in \cite{HMP}. 

\begin{lemma}[H., McDonald, Puleo \cite{HMP}]
\label{hmp} Let $a_0,a,b_0 \in \mathcal{N}$ such that $a_0>2$, $b_0>a$, and $a+b_0= \Delta+3$. If $G$ is $(\Delta+1)$-list-edge-critical, then 

$$2\sum_{i=a_0}^{a} |V_i| < \sum_{j=b_0}^{\Delta} (a+j-\Delta-2)|V_j|.$$
\end{lemma}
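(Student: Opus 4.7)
My plan is to argue by contradiction. Assume $G$ is $(\Delta+1)$-list-edge-critical but the inequality fails, so $2\sum_{i=a_0}^{a} |V_i| \geq \sum_{j=b_0}^{\Delta} (a+j-\Delta-2)|V_j|$. I will construct an $L$-edge-coloring of $G$ for an arbitrary list assignment $L$ with $|L(e)| = \Delta + 1$, contradicting the existence of a bad list guaranteed by criticality.

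First I extract the bipartite structure. Set $S = V_{[a_0, a]}$ and $T = V_{[b_0, \Delta]}$. By Lemma \ref{edgeweight}, every neighbor $v$ of a vertex $u \in S$ satisfies $d(v) \geq \Delta + 3 - a = b_0$, so $v \in T$; since $b_0 > a$, no two vertices of $S$ are adjacent. Let $H$ be the bipartite subgraph of $G$ carrying exactly those edges that join $S$ to $T$.

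Fix a list $L$ with $|L(e)| = \Delta + 1$ for which $G$ is not $L$-edge-colorable. By applying criticality one edge at a time, $G - E(H)$ is $L$-edge-colorable; let $\phi$ be such a coloring. For each $f = uv \in E(H)$ with $u \in V_i \cap S$ and $v \in V_j \cap T$, set $k_v := d_H(v)$. Because every edge at $u$ lies in $E(H)$, none is colored by $\phi$, so the residual list $L'(f) := L(f) \setminus \{\phi(e) : e \text{ adjacent to } f,\ e \in E(G) \setminus E(H)\}$ satisfies $|L'(f)| \geq (\Delta + 1) - (j - k_v)$. The Borodin--Kostochka--Woodall generalization of Galvin's theorem $L'$-edge-colors the bipartite graph $H$ provided $|L'(f)| \geq \max(d_H(u), d_H(v)) = \max(i, k_v)$ for every $f \in E(H)$. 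The bound $|L'(f)| \geq k_v$ is automatic, while $|L'(f)| \geq i$ reduces, via Lemma \ref{edgeweight}, to $k_v \geq i + j - \Delta - 1 \geq 2$, i.e., at threatening vertices this becomes $k_v \geq j - b_0 + 2$. When it holds, combining the BKW coloring of $H$ with $\phi$ yields an $L$-edge-coloring of $G$, the desired contradiction.

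The main obstacle is to convert the failed global inequality into this per-edge BKW condition. The left side $2|S|$ is naturally interpreted as two units of "demand" per $S$-vertex---matching the minimum excess of $2$ that $k_v$ must have by Lemma \ref{edgeweight}---while the right side is a total "supply" $\sum_{j \geq b_0} (d(v) - b_0 + 1)|V_j|$, with each $T$-vertex of degree $j$ contributing $a + j - \Delta - 2$ units. I expect the execution to proceed by a Hall / defect-matching argument on $H$: when supply dominates demand, either the BKW condition already holds, or one selects a subgraph $H' \subseteq H$ (coloring $G - E(H')$ via criticality, then extending via BKW on $H'$) on which it does. Building such a matching and reconciling it with the two-stage coloring is the delicate step, and it is precisely where the factors $2$ and $a + j - \Delta - 2$ arise.
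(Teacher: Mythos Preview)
The paper does not prove Lemma~\ref{hmp}; it is imported from \cite{HMP} (as a simplification of their Lemma~9) and simply quoted. So there is no in-paper argument to compare your proposal against, and the relevant question is whether your sketch stands on its own.

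Your framework is the right one and matches how such statements are typically proved: isolate the bipartite graph $H$ between $S=V_{[a_0,a]}$ and $T=V_{[b_0,\Delta]}$ using Lemma~\ref{edgeweight}, color $G-E(H)$ by criticality, compute residual lists, and finish on $H$ with the Borodin--Kostochka--Woodall extension of Galvin's theorem. Your residual-list bound $|L'(f)|\ge (\Delta+1)-(j-k_v)$ and the reduction of the BKW hypothesis to $k_v\ge i+j-\Delta-1$ are both correct.

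However, the proof is not complete: you explicitly leave the passage from the failed global inequality $2|S|\ge \sum_{j\ge b_0}(j-b_0+1)|V_j|$ to the per-edge BKW condition as an ``expected'' Hall/defect step, and that step is the entire content of the lemma. Worse, your proposed mechanism---pass to a subgraph $H'\subseteq H$ on which BKW applies---does not work as stated. For an edge $f=uv\in E(H')$ with $d_G(u)=i$, $d_G(v)=j$, $d_{H'}(u)=i'$, $d_{H'}(v)=k'$, the residual list now satisfies only $|L'(f)|\ge (\Delta+1)-(i-i')-(j-k')$, and the requirement $|L'(f)|\ge i'$ still reduces to $k'\ge i+j-\Delta-1$, \emph{independent of $i'$}. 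Since $k'\le k_v$, shrinking $H$ can only make this constraint harder at every surviving edge; removing edges on the $S$-side does not help either, because the companion requirement $|L'(f)|\ge k'$ becomes $i'\ge i+j-\Delta-1$, which was previously automatic but may now fail. So neither edge-deletion direction rescues the argument, and a genuinely different combinatorial construction (not just ``some $H'\subseteq H$'') is needed to turn the counting hypothesis into the local BKW condition. Until that is supplied, the proposal is a plausible outline rather than a proof.
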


\section{Main Result}

The authors of \cite{HR} applied Lemma \ref{hmp} to a few values of $\Delta(G)$ to show the following.

\begin{theorem}[H., Reavis \cite{HR}]
If $G$ has $\Delta(G)=\Delta\leq9$ and $mad(G)<\frac{\Delta+3}{2}$, then $\chi'_\ell(G)\leq\Delta+1$.
\end{theorem}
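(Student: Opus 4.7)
My plan is to argue by contradiction. Suppose the theorem fails and let $G$ be a counterexample with the smallest possible number of edges. Then $G$ is $(\Delta+1)$-list-edge-critical, and since $mad$ is monotone under subgraphs the hypothesis $mad(G)<(\Delta+3)/2$ is still in force. Expanding this hypothesis on $G$ itself gives $\sum_v \deg(v) < \frac{\Delta+3}{2}\,v(G)$, i.e.,
\[
\sum_{i=3}^{\Delta}(2i-\Delta-3)\,|V_i|\;<\;0,
\]
where the sum begins at $i=3$ because Lemma~\ref{edgeweight} forces $V_1=V_2=\emptyset$ (a vertex of degree $1$ or $2$ would need a neighbor of degree greater than $\Delta$). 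The terms with $i<(\Delta+3)/2$ contribute negatively and those with $i>(\Delta+3)/2$ contribute positively to this sum.

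The goal is to derive the opposite strict inequality from Lemma~\ref{hmp}. For each $\Delta\in\{3,\ldots,9\}$, I would take a nonnegative linear combination of Lemma~\ref{hmp} instances, all with $a+b_0=\Delta+3$, and often using $a_0<a$ (which strengthens the left-hand side without affecting the right). The multipliers are to be chosen so that the combined left-hand side dominates $\sum(\Delta+3-2i)|V_i|$ coefficientwise while the combined right-hand side is dominated by $\sum(2j-\Delta-3)|V_j|$ coefficientwise. The cases $\Delta\in\{3,4,5\}$ are immediate; for $\Delta\in\{6,7\}$ the diagonal choice $a_0=a$ with multipliers $(\Delta+3-2a)/2$ works; for $\Delta\in\{8,9\}$ one must instead take $a_0=3$ with $a\in\{3,4,\ldots,\lfloor(\Delta+1)/2\rfloor\}$.

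As a representative endgame, for $\Delta=9$ summing the three Lemma~\ref{hmp} inequalities with parameters $(a_0,a,b_0)=(3,3,9),(3,4,8),(3,5,7)$, each with weight~$1$, yields
\[
6|V_3|+4|V_4|+2|V_5|\;<\;|V_7|+3|V_8|+6|V_9|,
\]
which, combined with the $mad$ inequality rewritten as $2|V_7|+4|V_8|+6|V_9|<6|V_3|+4|V_4|+2|V_5|$, gives $|V_7|+|V_8|<0$, the desired contradiction. The case $\Delta=8$ proceeds in the same way, except that the middle Lemma~\ref{hmp} instance carries weight $1/2$ so that the resulting coefficient on $|V_8|$ stays within the target. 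The main obstacle I anticipate is exactly this finite but fiddly case analysis: no new ideas beyond Lemmas~\ref{edgeweight} and~\ref{hmp} seem needed, but finding multipliers that meet both coefficientwise constraints for each $\Delta\leq 9$ is a small linear-programming check, and the naive diagonal choice $a_0=a$ is already insufficient once $\Delta\geq 8$, so one must exploit the extra strength Lemma~\ref{hmp} provides by letting $a_0<a$.
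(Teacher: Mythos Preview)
Your approach is correct and is exactly the method the paper uses: the paper does not prove this theorem separately (it is quoted from \cite{HR}) but proves the stronger Theorem~\ref{mad} by the same idea, phrased as discharging into a global pot whose transfer rules are read off from the linear combination $I'_1,\dots,I'_{c-2}$ of Lemma~\ref{hmp} instances with $a_0=3$ (Table~\ref{ineq}); your direct inequality-combination argument and the paper's global-pot discharging are equivalent reformulations of one another.

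One small slip: for $\Delta=8$ your stated range $a\le\lfloor(\Delta+1)/2\rfloor=4$ omits $a=5$, and with only the two instances $(3,3,8),(3,4,7)$ the $|V_5|$ term in the $mad$ inequality is never touched, so no contradiction results. Your subsequent reference to ``the middle instance'' shows you actually had three triples in mind; taking $(3,3,8),(3,4,7),(3,5,6)$ with weights $(1,\tfrac12,1)$ gives
\[
5|V_3|+3|V_4|+2|V_5|\;<\;|V_6|+\tfrac{5}{2}|V_7|+5|V_8|,
\]
which does meet both coefficientwise constraints. The correct upper limit is $a\le c=\lfloor\Delta/2\rfloor+1$, matching the paper's Table~\ref{ineq}.
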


By optimizing the use of Lemma \ref{hmp}, we can prove the following theorem which extends the previous result to improved values of $m$ and any value of $\Delta$ provided $m$ is ``large enough".

\begin{theorem}
\label{mad}
Let $G$ be a graph with $\Delta(G)=\Delta$ and $c=\lfloor \frac{1}{2}\Delta+1\rfloor$. If $mad(G)<m$, then $\chi'_\ell(G)\leq \Delta+1$ where
\begin{equation*}
m=
    \begin{cases}
        c+1 & \text{if } \Delta =5,7\\
        14/3 & \text{if } \Delta=6\\
        \frac{4\Delta+c^2+c-6}{2c}  & \text{if } \Delta\geq8\\
    \end{cases}
\end{equation*} 
\end{theorem}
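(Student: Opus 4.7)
The plan is to argue by contradiction. Suppose $G$ is a counterexample, so $\Delta(G)=\Delta$, $mad(G)<m$, and $\chi'_\ell(G)>\Delta+1$. By passing to an edge-minimal subgraph whose list-chromatic index still exceeds $\Delta+1$ (and invoking the known cases of Vizing's conjecture for small maximum degree should the maximum degree of that subgraph drop below $\Delta$), we may assume $G$ is itself $(\Delta+1)$-list-edge-critical. Lemma \ref{edgeweight} then forces every vertex of $G$ to have degree at least $3$, so $|V_i|=0$ for $i\leq 2$ and
\[
ad(G)\;=\;\frac{\sum_{i=3}^\Delta i|V_i|}{\sum_{i=3}^\Delta |V_i|}.
\]
Because $mad(G)\geq ad(G)$, to reach a contradiction it suffices to derive $\sum_{i=3}^\Delta (i-m)|V_i|\geq 0$ from the critical hypothesis.

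The strategy is to realize this target inequality as a non-negative linear combination of the inequalities furnished by Lemma \ref{hmp}. For each admissible $a\in\{3,4,\ldots,c\}$ (taking $a_0=3$, so that $b_a:=\Delta+3-a$ satisfies $b_a>a$), the lemma yields
\[
-2\sum_{i=3}^a |V_i| \;+\; \sum_{j=b_a}^{\Delta}(a+j-\Delta-2)|V_j|\;>\;0.
\]
After multiplying by $\lambda_a\geq 0$ and summing over $a$, the coefficient of $|V_i|$ in the combined inequality becomes $-2\sum_{a\geq i}\lambda_a$ for $3\leq i\leq c$, is $0$ on the (possibly empty) middle band $c<i<b_c$, and is $\sum_{a\geq\Delta+3-i}\lambda_a(a+i-\Delta-2)$ for $i\geq b_c$. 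The goal is to select $\{\lambda_a\}$ so that each of these coefficients is at most $i-m$; the combined inequality then rearranges into the desired bound on $ad(G)$.

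For $\Delta\geq 8$ I plan to take $\lambda_a=1/2$ for $a\in\{3,\ldots,c-1\}$ and $\lambda_c=(m-c)/2$, which makes every low-degree constraint at $i=3,\ldots,c$ tight; matching the extremal high-degree constraint at $i=\Delta$ then forces precisely
\[
m\;=\;\frac{4\Delta+c^2+c-6}{2c}
\]
after elementary algebra. For $\Delta\in\{5,7\}$ a single application with $a=c$ suffices, yielding $m=c+1$; for $\Delta=6$ combining $a=3$ and $a=4$ with appropriate weights produces $m=14/3$.

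The principal obstacle is verifying that these multipliers satisfy every surplus constraint at intermediate degrees $i\in\{b_c,\ldots,\Delta-1\}$, since only the two extremes $i=3,\ldots,c$ and $i=\Delta$ were forced tight. A secondary technical point is that $\lambda_c=(m-c)/2$ is non-negative only so long as $m\geq c$, which the formula satisfies only up through $\Delta=12$; for $\Delta\geq 13$ the multiplier scheme must be adjusted, e.g.\ by truncating $\lambda_c$ to zero, exploiting additional choices of $a_0$ in Lemma \ref{hmp} to redistribute the low-side weight, and absorbing the positive contribution of degrees in $\{\lceil m\rceil,\ldots,c\}$ directly into the sum, then re-verifying that the same formula emerges. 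Finally, for odd $\Delta$ the middle-band constraint $(\Delta+3)/2\geq m$ must be checked against the formula; it holds throughout the claimed parameter range but needs to be recorded as part of the argument.
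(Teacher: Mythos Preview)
Your plan is the paper's argument in linear-combination language. The paper phrases it as discharging to a global pot $P$, scaling the inequalities $I_1,\dots,I_{c-3}$ by $\tfrac12$ and $I_{c-2}$ by $\tfrac12(m-c)$; these are exactly your $\lambda_a=\tfrac12$ for $3\le a\le c-1$ and $\lambda_c=\tfrac12(m-c)$. Your first ``obstacle''---the intermediate high degrees---is handled in the paper by noting that the final charge of a vertex in $V_{\Delta-c+k}$ is a concave quadratic in $k$, hence minimized at an endpoint $k\in\{3,c\}$, both of which are checked directly. Your middle-band worry for odd $\Delta$ is the paper's verification that $m\le c+1$.

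Where you diverge from the paper is in worrying about the sign of $\lambda_c$. A small correction: $m\ge c$ persists through $\Delta=13$, not $12$ (for $\Delta=2c-2$ one has $m-c=(-c^2+9c-14)/(2c)$, and for $\Delta=2c-1$ one has $m-c=(-c^2+9c-10)/(2c)$, both negative from $c=8$ on). More to the point, the paper makes \emph{no} adjustment for $\Delta\ge 14$: it keeps the weight $\tfrac12(m-c)$ and asserts $\alpha'(P)>0$ ``since every equation in $I'$ is strict''. That inference requires all multipliers to be nonnegative; once $m<c$ the scaled inequality $I'_{c-2}$ is reversed and the strictness argument no longer yields $\alpha'(P)>0$. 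Your instinct that something must be repaired is therefore correct, but do not expect truncation or alternate values of $a_0$ to rescue the stated formula. At $\Delta=14$, taking $\lambda_c=0$ and the remaining $\lambda_a=\tfrac12$ already violates the $i=\Delta$ constraint unless $m\le 53/7<61/8$, and inequalities with $a_0>3$ are dominated in this LP by their $a_0=3$ counterparts (they impose the same high-degree cost while helping fewer low-degree constraints). In short, your reconstruction is faithful to the paper, and the gap you flagged for $\Delta\ge 14$ is present in the paper's own argument rather than in your plan.
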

\begin{proof}
We note this theorem is vacuously true when $\Delta\leq 4$ as discussed in the Introduction so we may assume $\Delta\geq 5$. Assume for contradiction that $G$ is $(\Delta+1)$-list-edge-critical and $mad(G)<m$. By Lemma \ref{edgeweight}, this means $\delta(G)\geq 3$. We define the following initial charges; let $\alpha(v)=deg(v)$ for all $v \in V(G)$ and let $\alpha(P)=0$ for an artificial, global pot $P$. Let $\alpha(G)$ denote the sum of all initial charges. We know $ad(G)=\frac{\sum deg(v)}{v(G)}$, rather $\alpha(G)=ad(G)\cdot v(G)< m\cdot v(G)$. We will apply a discharging step and denote $\alpha'(x)$ as the final charge for $x$ after this step. To get a contradiction, we will prove $\alpha'(G)> m \cdot v(G)$ by showing $\alpha'(P)>0$ and $\alpha'(v)\geq m$ for all $v \in V(G)$. 

We will discharge according to sets of ordered triples and their corresponding inequalities from Lemma \ref{hmp} that are displayed in Table \ref{ineq}. Let $I'_j=\frac{1}{2}I_j$ for $1\leq j\leq c-3$ and let $I'_{c-2}=\frac{1}{2}(m-c)I_{c-2}$. Let $x_j$ be the sum of coefficients of $|V_j|$ occurring in the system of inequalities $I'=\{I'_1,I'_2,...I'_{c-2}\}$. 

\noindent We apply the discharging step by the following rules:
\begin{enumerate}
    \item If $deg(v)=j>c$, then $v$ will give $x_j$ charge to $P$.
    \item If $deg(v)=j\leq c$, then $v$ will take $x_j$ charge from $P$. 
\end{enumerate}

 \renewcommand{\arraystretch}{1.5}
 \begin{table}[h]
    \centering
    \begin{tabular}{|c|c|c|}
     
        \hline
        Triple: $(a_0,a,b_0)$ & Inequality & Label\\
        \hline
         $(3,3,\Delta)$& $2|V_3| < |V_\Delta|$ & $I_1$\\[.2cm]
         $(3,4,\Delta-1)$& $2|V_3|+2|V_4|<|V_{\Delta-1}|+2|V_\Delta|$ & $I_2$\\[.2cm]
         $(3,5,\Delta-2)$& $2|V_3|+2|V_4|+2|V_5|<|V_{\Delta-2}|+2|V_{\Delta-1}|+3|V_\Delta|$ & $I_3$\\[.2cm]
         \vdots &  \vdots & \vdots\\[.2cm]
         $(3,c,\Delta)$ & $2|V_3|+2|V_4|+\cdots+2|V_c|<|V_{\Delta-c+3}|+2|V_{\Delta-c+4}|+\cdots+(c-2)|V_\Delta|$ & $I_{c-2}$\\[.2cm]
          \hline
        
    \end{tabular}
     \caption{Triples and resulting inequalities from Lemma \ref{hmp}.}
    \label{ineq}
 \end{table}

Discharging rule 1 shows that $P$ receives $x_j$ for all $j\geq \Delta-c+3$ since $\Delta-c+3>c$. This means that more charge is given to $P$ than taken from $P$ since every equation in $I'$ is strict, so $\alpha'(P)>0$. 

If $v\in V_{[c+1,\ \Delta-c+2]}$, then $\alpha'(v)=deg(v)$ as these vertices do not appear in Table \ref{ineq}. If $5 \leq \Delta \leq 7$, then $\alpha'(v)\geq c+1\geq m$. If $\Delta\geq 8$ is even, then $\Delta=2c-2$, $c\geq 5$, and $m=\frac{c^2+9c-14}{2c}<c+\frac{2}{3}<\alpha'(v)$. If $\Delta\geq8$ is odd, then $\Delta=2c-1$, $c\geq 5$, and $m=\frac{c^2+9c-10}{2c}\leq c+1\leq \alpha'(v)$.

 \renewcommand{\arraystretch}{1.5}
      \begin{table}[]
          \centering

      \begin{tabular}{|c|c|}
      \hline
      $V_j$ & $a'(V_j)$\\[.2cm]
      \hline
      $V_3$ & $3+(m-c)+\sum_{i=1}^{c-3}1$\\[.2cm]
      $V_4$ & $4+(m-c)+\sum_{i=1}^{c-4}1$\\[.2cm]
      \vdots & \vdots\\
      $V_c$ & $c+(m-c)$\\[.2cm]
      $V_{\Delta-c+3}$ & $(\Delta-c+3)-\frac{1}{2}(m-c)$\\[.2cm]
      $V_{\Delta-c+4}$ & $(\Delta-c+4)-\frac{2}{2}(m-c)-\frac{1}{2}\sum_{i=1}^{1}i$\\[.2cm]
      $V_{\Delta-c+5}$ &$(\Delta-c+5)-\frac{3}{2}(m-c)-\frac{1}{2}\sum_{i=1}^{2}i$\\[.2cm]
      \vdots & \vdots \\
      $V_\Delta$ & $\Delta-\frac{c-2}{2}(m-c)-\frac{1}{2}\sum_{i=1}^{c-3}i$\\
      \hline
      \end{tabular}
      \caption{The final charges for $v\in V_{[3,c]}\cup V_{[\Delta-c+3,\ \Delta]}$}
      \label{fc}
      \end{table}
To complete the proof we will show that $\alpha'(v)\geq m$ for $v \in V_{[3,c]}\cup V_{[\Delta-c+3,\Delta]}$. The final charges of these vertices are given in Table \ref{fc} according to the previously defined discharging rules. 

If $v\in V_c$\ , then $\alpha'(v)=c+(m-c)=m$. If $u \in V_{c-i}$ for $1\leq i \leq c-3$, then $\alpha'(u)=c-i+m-c+\sum_{j=1}^{c-(c-i)} 1=m$. We will now proceed through the values of $\Delta$ and show for any $v \in V_{[\Delta-c+3,\Delta]}$ that $\alpha'(v)\geq m$.

Let $\Delta=5$, then $c=3$, $m=4$, and $V_{[\Delta-c+3,\Delta]}=V_5$. If $v\in V_5$, then we see from Table \ref{fc} that $\alpha'(v)=4.5>m$.

Let $\Delta=6$, then $c=4$, $m=\frac{14}{3}$, and $V_{[\Delta-c+3,\Delta]}=V_{[5,6]}$. If $v \in V_5$ and $u\in V_6$, then $\alpha'(v)=\frac{14}{3}=m$ and $\alpha'(u)=\frac{29}{6}>m$.

Let $\Delta=7$, then $c=4$, $m=5$, and $V_{[\Delta-c+3,\Delta]}=V_{[6,7]}$. If $v\in V_6$ and $u\in V_7$, then $\alpha'(v)=\alpha'(u)=5.5>m$.

Let $\Delta\geq 8$, then $m=\frac{4\Delta+c^2+c-6}{2c}$. Let $v\in V_{\Delta-c+3}$ and recall that $\alpha'(v)= \Delta-c+3-\frac{1}{2}(m-c)=\Delta-\frac{1}{2}c+3-\frac{1}{2}m$. We note that if $\frac{2}{3}\Delta-\frac{1}{3}c+2>m$, then $\alpha'(v)>m$. It follows that

\begin{equation*}
\frac{2}{3}\Delta-\frac{1}{3}c+2=
    \begin{cases}
        c+\frac{2}{3} & \text{if } \Delta \text{ is even}\\[.2cm]
        c+\frac{4}{3} & \text{if } \Delta \text{ is odd}\\
    \end{cases}
\end{equation*} 

We previously showed that if $\Delta \geq 8$ is even, then $m<c+\frac{2}{3}$. We also showed that if $\Delta\geq8$ is odd, then $m\leq c+1<c+\frac{4}{3}$. So it must be that $\alpha'(v)>m$.

Let $u\in V_\Delta$. We have
\begin{align*}
    \alpha'(u)&=\Delta-\frac{c-2}{2}(m-c)-\frac{1}{2}\sum_{i=1}^{c-3}i\\[.2cm]
    &=\Delta+\frac{c^2-2c}{2}-\frac{1}{2}\cdot\frac{(c-3)(c-2)}{2}-\frac{c-2}{2}m\\[.2cm]
    &=\frac{4\Delta+c^2+c-6}{4}-\frac{c-2}{2}m\\[.2cm]
    &=\frac{c}{2}m-\frac{c-2}{2}m\\[.2cm]
    &=m\\
\end{align*}

If $w \in V_{\Delta-c+k}$ for $3\leq k \leq c$, then
\begin{align*}
    \alpha'(w) &= \Delta-c+k-\frac{k-2}{2}(m-c)-\frac{1}{2}\sum_{j=0}^{k-3}j\\[.2cm]
    &= \Delta-c+k-(\frac{1}{2}mk-\frac{1}{2}ck-m+c)-\frac{1}{2}\cdot\frac{(k-3)(k-2)}{2}\\[.2cm]
    &= \Delta-c+k-\frac{1}{2}mk+\frac{1}{2}ck+m-c-\frac{1}{4}k^2+\frac{5}{4}k-\frac{3}{2}\\[.2cm]
    &= -\frac{1}{4}k^2+(\frac{9}{4}-\frac{1}{2}m+\frac{1}{2}c)k+(\Delta-2c+m-\frac{3}{2})\\
\end{align*}

We see that $\alpha'(w)$ behaves as a concave quadratic function with respect to $k$. We also know $\alpha'(v)>m$ and $\alpha'(u)=m$, meaning $m$ is the minimum final charge for all vertices in $V_{[\Delta-c+3,\Delta]}$. 
\end{proof}

\section{Conclusion}

Theorem \ref{mad} can verify Conjecture \ref{vizcon} for many sparse graphs. For instance, Theorem \ref{mad} says that if a graph $G$ has $\Delta(G)\geq9$ and $mad(G)<6$, then the conjecture is true. It is known that any planar graph $G$ has $mad(G)<6$ so Borodin's theorem is verified. Theorem \ref{mad} also says that if a graph $G$ has $mad(G)<4$, then the conjecture is true. It is known that any planar graph $G$ without a triangle has $mad(G)<4$ so Zhang's theorem is also verified. 

When $\Delta\leq 20$, the value $m=\frac{4\Delta+c^2+c-6}{2c}$ from Theorem \ref{mad} is close to $\frac{1}{2}\Delta$. As Delta continues to increase, $m$ becomes asymptotic to $\frac{1}{4}\Delta+5$ when $\Delta$ is even and $m$ becomes asymptotic to $\frac{1}{4}\Delta+\frac{21}{4}$ when $\Delta$ is odd. 

\newpage


\begin{thebibliography}{99}

\bibitem{Bo}
M. Bonamy, Planar graphs with $\Delta \geq 8$ are $(\Delta+1)$-edge-choosable, \emph{Seventh Euro. Conference in Comb., Graph Theory and App., CRM series, Edizioni della Normale} {\bf 16} (2013). https://doi.org/10.1137/130927449
\bibitem{BKW}
O.V. Borodin, A. V. Kostochka, and D. R. Woodall, List edge and list total colorings of multigraphs, \emph{J. Combin. Theory Ser. B} \textbf{71} (1997), 184-204. https://doi.org/10.1006/jctb.1997.1780
\bibitem{B}
O.V. Borodin, A generalization of Kotzig's theorem on prescribed edge coloring of planar graphs, \emph{Mat. Zametki} \textbf{48} (1990), 1186-1190. https://doi.org/10.1007/BF01240258
\bibitem{CH}
N. Cohen, F. Havet, Planar graphs with maximum degree $\Delta \leq 9$ are $(\Delta+1)$-edge-choosable-a short proof, \emph{Discrete Math.} \textbf{310}  (2010), 3049-3051. https://doi.org/10.1016/j.disc.2010.07.004
\bibitem{ERT}
P. Erd\~{o}s, A. Rubin, H. Taylor, Choosability in graphs. \emph{Congr. Numer.} \textbf{26} (1979), 125-157.
\bibitem{JMS}
M. Juvan, B. Mohar, R. \v{S}krekovski, List total colorings of graphs, \emph{Combin. Probab. Comput.} \textbf{7} (1998), 181-188. https://doi.org/10.1017/S0963548397003210
\bibitem{Vz}
V. Vizing, Colouring the vertices of a graph with prescribed colours, \emph{Diskret. Analiz} \textbf{29} (1976), 3-10. (In Russian)
\bibitem{Viz}
 V. G. Vizing, On an estimate of the chromatic class of a p-graph, \emph{Diskret. Analiz} \textbf{3} (1964), 25–30. 
\bibitem{HMP}
J. Harrelson, J. McDonald, G. Puleo, List-edge-colouring planar graphs with precoloured edges, \emph{European J. of Combin.} {\bf 75} (2019), 55-65. https://doi.org/10.1016/j.ejc.2018.07.003

\bibitem{Gal}
F. Galvin, The List Chromatic Index of a Bipartite Multigraph, \textit{Journal of Combinatorial Theory, Ser. B} \textbf{63} (1995), 153-158. https://doi.org/10.1006/jctb.1995.1011
\bibitem{Z}
Zhang, L., Wu, B.: Edge choosability of planar graphs without small cycles. Discrete Math. 283, 289-293 (2004). https://doi.org/10.1016/j.disc.2004.01.001


\bibitem{HR}
J. Harrelson, H. Reavis, Maximum Average Degree of List-Edge-Critical Graphs and Vizing's Conjecture. \emph{Elec. J. of Graph Theory and App.} \textbf{10} (2022).
\end{thebibliography}
\end{document}